\documentclass[11pt,a4paper,reqno]{amsart}

\usepackage{amsmath,amssymb,amsthm}
\usepackage{enumitem}
\usepackage{hyperref}
\usepackage{geometry}
\newtheorem{theorem}{Theorem}[section]

\newtheorem{corollary}[theorem]{Corollary}
\theoremstyle{definition}
\newtheorem{definition}[theorem]{Definition}
\newtheorem{remark}[theorem]{Remark}

\title[Dense Chains, Antichains, and Universal Partial Orders]
{Dense Chains, Antichains, and Universal Partial Orders Inside a Bounded Finite-One Degree}

\author[P.~Cintioli]{Patrizio Cintioli}

\address{Mathematics Division, School of Science and Technology, University of Camerino, Italy}
\email{patrizio.cintioli@unicam.it}

\date{}

\subjclass[2020]{Primary 03D30; Secondary 06A06}

\keywords{many-one degrees, bounded finite-one degrees, one-one degrees, countable partial orders, dense linear orders, antichains}

\begin{document}

\begin{abstract}
We construct a nonrecursive set \(A\le_T\emptyset'\) and a uniformly computable
family of sets \(C_0,C_1,\dots\), all bounded finite-one equivalent to
\(A\), such that the corresponding \(1\)-degrees form a copy of the dense
linear order \((\mathbb Q,\le)\). Motivated by a recent preprint of Richter,
Stephan, and Zhang, which shows that bounded finite-one degrees can be as rigid
as a discrete \(\omega\)-chain and asks whether there are bounded finite-one
degrees consisting exactly of a dense linearly ordered set of \(1\)-degrees, we
introduce a block-density profile method for controlling one-one reducibility
inside a single bounded finite-one degree.

As further applications, in the same bounded finite-one degree we obtain an
infinite antichain of \(1\)-degrees and, more generally, an embedded copy of
every countable partial order. A single bounded finite-one degree
can already exhibit dense, incomparable, and universal order-theoretic
behaviour.

Our main technical tool is a profile theorem based on computable block-density
codings. The witness set constructed here is not \(m\)-rigid, so the
phenomena obtained in this paper arise from a mechanism different from earlier
\(m\)-rigidity-based constructions. Although our results do not settle the exact
realization problem posed by Richter, Stephan, and Zhang, we show that
density itself is not the obstruction: a single bounded finite-one degree may
already contain a copy of \((\mathbb Q,\le)\), an infinite antichain, and
embeddings of all countable partial orders.
\end{abstract}

\maketitle

\section{Introduction}

Strong refinements of many-one reducibility, such as one-one, finite-one, and
bounded finite-one reducibility, provide a natural framework for analysing the
internal structure of many-one degrees. Given a set \(A\), one may ask which
partially ordered configurations of \(1\)-degrees can occur inside a fixed
finite-one degree or bounded finite-one degree represented by \(A\).

This theme was investigated in detail by Richter, Stephan, and Zhang
\cite{RSZ2026}. On the finite-one side, they showed that nonrecursive,
nonirreducible finite-one degrees are already highly complex: every computable
partial order embeds effectively into the structure of \(1\)-degrees inside such
a finite-one degree, and hence every countable partial order embeds
noneffectively. By contrast, their bounded finite-one results reveal a much more
delicate picture. In particular, they constructed a nonrecursive bounded
finite-one degree whose internal \(1\)-degrees form a strictly ascending chain
of order type \(\omega\) \cite[Theorem~18]{RSZ2026}. The finite-one
universality phenomenon therefore does not extend automatically to the bounded
finite-one setting.

This led Richter, Stephan, and Zhang to ask which linear orders can occur
\emph{exactly} as the full structure of \(1\)-degrees inside a bounded
finite-one degree. Their Open Problem~3 \cite[Section~5]{RSZ2026} is:
\begin{quote}
\emph{Are there bounded finite-one degrees consisting exactly of a dense
linearly ordered set of \(1\)-degrees?}
\end{quote}

In their further remarks, Richter, Stephan, and Zhang go beyond Open
Problem~3 and ask more generally whether bounded finite-one degrees can exhibit
linear orders other than the order type \(\omega\) produced by their
Theorem~18. In the strong exact-realization sense made explicit there, our
results do not answer this question affirmatively, since the bounded finite-one
degree constructed here is not itself exactly linearly ordered. We show,
however, that the bounded finite-one setting already supports much richer linear
behaviour internally: a single bounded finite-one degree may contain a copy of
\((\mathbb Q,\le)\), and indeed, by
Corollary~\ref{cor:profile-universal-poset}, embedded copies of all countable
linear orders.

A different perspective on bounded finite-one degrees was developed in recent
work on \(m\)-rigidity \cite{Cintioli2026}. Recall that a set \(A\) is
\(m\)-rigid if every total computable many-one self-reduction of \(A\) is
eventually the identity. In that setting, one obtains strong negative
information about typical bounded finite-one degrees: for a measure-\(1\) and
comeager class of sets, a single bounded finite-one degree already contains an
infinite antichain of \(1\)-degrees, and therefore cannot be linearly ordered
\cite{Cintioli2026}. In the typical rigid regime, Open Problem~3 therefore has a
negative answer.

The present paper moves in the opposite direction. Rather than studying the
typical rigid case, we construct explicitly a nonrecursive set
\(A\le_T\emptyset'\) whose bounded finite-one degree exhibits a rich
internal order structure. Our main technical tool is
Theorem~\ref{thm:profile}, a profile theorem based on computable block-density
codings. Roughly speaking, each representative is assigned a computable
block-density profile recording its local preimage density over a common block
decomposition. Pointwise domination of profiles yields positive \(1\)-reductions,
while a uniform positive gap along infinitely many selected blocks allows us to
diagonalize directly against \(1\)-reductions in the opposite direction. This
produces a flexible block-geometric mechanism for prescribing comparison
patterns while keeping all representatives inside a single bounded finite-one
degree.

As consequences, we show that one can realize within a single bounded finite-one
degree a copy of the dense linear order \((\mathbb Q,\le)\), an infinite
antichain of \(1\)-degrees, and, more generally, an embedded copy of every
countable partial order; see
Corollaries~\ref{cor:profile-rationals},
\ref{cor:profile-antichain},
\ref{cor:profile-combined},
\ref{cor:profile-computable-posets}, and
\ref{cor:profile-universal-poset}. In particular, although we do not solve the
exact realization problem posed by Richter, Stephan, and Zhang, we show that
density itself is not the obstruction: a bounded finite-one degree may already
contain a copy of \((\mathbb Q,\le)\), and indeed may support universal
countable order-theoretic behaviour.

It is important that the witness constructed here lies outside the rigidity
framework of \cite{Cintioli2026}. The set \(A\) obtained in
Corollary~\ref{cor:profile-combined} is constant on a computable partition into
finite blocks and therefore admits nontrivial computable many-one
self-reductions collapsing each block to a distinguished representative; see
Remark~\ref{rem:not-m-rigid}. In particular, \(A\) is not \(m\)-rigid. Since
the class of \(m\)-rigid sets has measure \(1\) and is comeager
\cite{Cintioli2026}, our witness necessarily belongs to the null and meager
complement of the typical rigid regime.
The antichain structure arising here is therefore not a consequence of \(m\)-rigidity.
Rather, it coexists with dense linear behaviour inside a single bounded finite-one degree.

Accordingly, the contribution of this paper is not an affirmative solution to
Open Problem~3 in its exact form. The degree we construct does not consist
\emph{exactly} of a dense linear order of \(1\)-degrees, since it also contains
incomparable elements. What our results show is that the true difficulty of the
open problem lies in the word ``exactly'': bounded finite-one degrees may
already contain dense chains, infinite antichains, and even universal countable
partial orders, all inside a single explicitly constructed degree.

The paper is organised as follows. After fixing notation, we prove the Profile
Theorem, which gives a general block-density method for producing prescribed
one-one comparison patterns inside a single bounded finite-one degree.
We then derive results yielding a copy of \((\mathbb Q,\le)\), an infinite antichain,
their coexistence within the same degree, and finally universal embeddings of computable and countable partial orders.

\section{Preliminaries and Notation}

We assume familiarity with standard notions and notation from computability theory; for general background see
\cite{DH2010,Nies2009,OdifreddiCRT,OdifreddiCRT2,Rogers1967,Soare1987}.
For background more specifically on strong reducibilities and degree structures, see also
\cite{Odifreddi1981,Odifreddi1999}.
All sets considered are subsets of the natural numbers $\mathbb N = \{0,1,\dots\}$. We routinely identify a set $A \subseteq \mathbb N$ with its characteristic function, writing $A(x)=1$ if $x\in A$ and $A(x)=0$ otherwise.

We fix a standard effective enumeration $\varphi_0,\varphi_1,\dots$ of all partial computable functions. We write $\varphi_e(x)\downarrow$ to indicate that the $e$-th partial computable function converges on input $x$. We denote by $\emptyset'$ the Turing jump of the empty set.
Any set constructed by a stage construction using finitely many queries to $\emptyset'$ at each stage is computable in $\emptyset'$, and hence belongs to $\Delta^0_2$.

We now introduce the reducibilities considered in this paper, which are variants of many-one reducibility.
Recall that a set \(A\subseteq\mathbb N\) is many-one reducible to a set \(B\subseteq\mathbb N\) if there is a total computable function \(f:\mathbb N\to\mathbb N\) such that
\[
x\in A \iff f(x)\in B
\qquad\text{for all }x\in\mathbb N.
\]

\begin{definition}
Let \(A,B\subseteq\mathbb N\).
\begin{enumerate}[label=\textup{(\roman*)}]
\item We write \(A\le_1 B\) if there is a total computable injective function \(f:\mathbb N\to\mathbb N\) such that
\[
x\in A \iff f(x)\in B
\qquad\text{for all }x\in\mathbb N.
\]

\item We write \(A\le_{\rm bfo} B\) if there are a total computable function \(f:\mathbb N\to\mathbb N\) and a constant \(c\ge 1\) such that
\[
x\in A \iff f(x)\in B
\qquad\text{for all }x\in\mathbb N,
\]
and
\[
|f^{-1}(y)|\le c
\qquad\text{for all }y\in\mathbb N.
\]
\end{enumerate}
\end{definition}

We say that \(A\) and \(B\) are \(r\)-equivalent, written \(A\equiv_r B\), if both \(A\le_r B\) and \(B\le_r A\), where \(r\in\{1,{\rm bfo}\}\). The equivalence classes are the \emph{\(r\)-degrees}. We write \(A<_r B\) if \(A\le_r B\) but \(B\not\le_r A\).

By definition,
\[
A\le_1 B \implies A\le_{\rm bfo} B.
\]
Thus a bounded finite-one degree generally contains a nontrivial partially ordered structure of one-one degrees.

\section{A Profile Theorem for Block-Density Codings}\label{sec:profile}

This section isolates the combinatorial core of our method. The basic idea is to
encode each representative by a computable block-density profile measuring its
local preimage density over a common factorial block decomposition. Pointwise
comparison of profiles yields positive \(1\)-reductions, while a uniform positive
gap along infinitely many selected blocks provides enough room for direct
diagonalization against \(1\)-reductions in the opposite direction. The
following Profile Theorem packages this mechanism in a general form from which
all later applications will be derived.

\begin{definition}\label{def:profile}
A \emph{block-density profile} is a total function
\[
u:\mathbb N\to \mathbb Q\cap(1,2).
\]
A family $\{u_i\}_{i\in\mathbb N}$ of block-density profiles is \emph{uniformly computable} if the function
\[
(i,k)\longmapsto u_i(k)
\]
is computable.
\end{definition}

Informally, a block-density profile records, block by block, the local preimage density used in the coding construction.

\begin{theorem}[Profile Theorem]\label{thm:profile}
Let $\{u_i\}_{i\in\mathbb N}$ be a uniformly computable family of block-density profiles, and let
$D\subseteq\mathbb N^2$ be a computable set of ordered pairs.
Assume that for each $(i,j)\in D$ we are given, uniformly computably,
\begin{enumerate}[label=\textup{(\roman*)}]
\item a rational number $\delta_{i,j}>0$; and
\item a strictly increasing computable function
$s_{i,j}:\mathbb N\to\mathbb N$ with infinite range
$S_{i,j}=\{s_{i,j}(t):t\in\mathbb N\}$,
\end{enumerate}
such that
\begin{equation}\label{eq:profile-gap}
u_i(k)\ge u_j(k)+\delta_{i,j}
\end{equation}
for every $k\in S_{i,j}$.

\medskip

Then there exist a set $A\subseteq\mathbb N$, computable in $\emptyset'$, and a uniformly computable family of total surjective functions
\[
h_i:\mathbb N\to\mathbb N \qquad (i\in\mathbb N),
\]
such that, writing
$I_k=[a_k,a_{k+1})$ with $a_0=0$ and $a_{k+1}=a_k+k!$, the following hold:
\begin{enumerate}[label=\textup{(\alph*)}]
\item \label{itm:profile-constant}
$A$ is constant on each block $I_k$.

\item \label{itm:profile-bfo}
Each $h_i$ is bounded finite-one with global bound $2$.

\item \label{itm:profile-def-Ci}
For $C_i(x)=A(h_i(x))$, one has $C_i\equiv_{\rm bfo}A$ for every $i\in\mathbb N$.

\item \label{itm:profile-positive}
If $u_i(k)\le u_j(k)$ for every $k\in\mathbb N$, then $C_i\le_1 C_j$.

\item \label{itm:profile-negative}
If $(i,j)\in D$, then $C_i\not\le_1 C_j$.
\end{enumerate}

In particular, whenever $u_i(k)\le u_j(k)$ for all $k\in\mathbb N$ and $(j,i)\in D$, one has $C_i<_1 C_j$.
\end{theorem}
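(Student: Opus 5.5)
The plan is to make every $C_i$ a stretched copy of $A$: the block $I_k$ of $A$ is replaced by a block of length about $u_i(k)\,k!$ in $C_i$, and $A$ is built by a $\emptyset'$-oracle diagonalization that only fixes whole blocks.

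\textbf{Coding.} For each $i$, set $b^i_0=0$ and $b^i_{k+1}=b^i_k+\lfloor u_i(k)\,k!\rfloor$, and let $J^i_k=[b^i_k,b^i_{k+1})$. Since $1<u_i(k)<2$, we have $k!\le |J^i_k|<2\cdot k!$. This is uniformly computable because $(i,k)\mapsto u_i(k)$ is. Let $h_i$ map $J^i_k$ onto $I_k$ as follows: the first $k!$ elements of $J^i_k$ go bijectively onto $I_k$, and the remaining $|J^i_k|-k!<k!$ elements go injectively into $I_k$. Then $h_i$ is total, surjective, and uniformly computable, and every fibre has size at most $2$. This gives (b).

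\textbf{Properties (a), (c), (d).} Property (a) will hold by construction. By (a), $C_i(x)=A(h_i(x))$ is constant on each $J^i_k$ with value $A(I_k)$. For (c), $h_i$ itself witnesses $C_i\le_{\rm bfo}A$. Conversely, the map sending the $r$-th element of $I_k$ to the $r$-th element of $J^i_k$ is a $1$-reduction of $A$ to $C_i$, and hence a bfo-reduction. For (d), if $u_i\le u_j$ pointwise, then monotonicity of the floor gives $|J^i_k|\le|J^j_k|$ for all $k$. Sending the $r$-th element of $J^i_k$ to the $r$-th element of $J^j_k$ is then computable and injective, and it preserves membership because both blocks carry the value $A(I_k)$. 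So $C_i\le_1 C_j$. The final ``in particular'' clause follows from (d) and (e).

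\textbf{Construction of $A$ and property (e).} We meet requirements $R_{i,j,e}$ for $(i,j)\in D$ and $e\in\mathbb N$: $\varphi_e$ is not a $1$-reduction of $C_i$ to $C_j$. The requirements are handled in order, one per stage. At each stage only finitely many blocks have been assigned a value.

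At the stage for $R_{i,j,e}$, search for $t$ such that $k=s_{i,j}(t)$ satisfies three conditions: $I_k$ is still unassigned, $k$ is larger than every block index used so far, and $\delta_{i,j}\,k!>1$. Such $t$ exists because $S_{i,j}$ is infinite. We then ask $\emptyset'$ whether $\varphi_e$ converges on all of the finite set $J^i_k$.
\begin{itemize}
\item If $\varphi_e$ diverges somewhere on $J^i_k$, it is not total, so the requirement is met.
\item If $\varphi_e$ converges on $J^i_k$ but is not injective there, which is a computable check, the requirement is again met.
\item Otherwise, $\varphi_e$ is injective on $J^i_k$.
\end{itemize}

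In the third case we use the pigeonhole step, which is the crux of the argument. The gap \eqref{eq:profile-gap} gives
\[
|J^i_k|\ \ge\ \lfloor (u_j(k)+\delta_{i,j})k!\rfloor\ >\ \lfloor u_j(k)k!\rfloor=|J^j_k|,
\]
where the strict inequality uses $\delta_{i,j}k!>1$. So some $x\in J^i_k$ has $\varphi_e(x)\in J^j_m$ with $m\ne k$. Since $C_i(x)=A(I_k)$ and $C_j(\varphi_e(x))=A(I_m)$, it suffices to make $A(I_k)\ne A(I_m)$.
\begin{itemize}
\item If $I_m$ is already assigned, give $I_k$ the opposite value.
\item If $I_m$ is unassigned, assign $I_m$ the value $1$ and $I_k$ the value $0$.
\end{itemize}
Either way $R_{i,j,e}$ is satisfied permanently, because assigned blocks are never changed.

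At the end of each stage, assign $0$ to all unassigned blocks below the largest index used so far. Then every block is eventually assigned a value, and $A$ is constant on blocks, which is (a). Each stage makes finitely many $\emptyset'$-queries, so $A\le_T\emptyset'$.

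The main obstacle is this last step. The gap condition has to yield at least one point that escapes $J^j_k$, and we must be free to set the value of $I_k$ against the escaped block, whether that block is fixed or not. The freshness choice of $k$ and the threshold $\delta_{i,j}k!>1$ are exactly what make this possible.
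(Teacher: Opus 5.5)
Your proof is correct. Its overall architecture is the paper's: stretched blocks $K_{i,k}$ (your $J^i_k$) of length $\lfloor u_i(k)k!\rfloor$, $C_i=A\circ h_i$, blockwise translations for (d), and a $\emptyset'$-guided finite-extension construction with the same three cases. Your $h_i$ (essentially $b^i_k+r\mapsto a_k+(r \bmod k!)$) and your explicit reduction $a_k+r\mapsto b^i_k+r$ witnessing $A\le_1 C_i$ are minor simplifications of the paper's scaled map $r\mapsto\lfloor rk!/\ell_i(k)\rfloor$ and its least-preimage search.

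The genuine difference is the pigeonhole step in (e). The paper chooses $k\in S_{i,j}$ satisfying the much stronger inequality $|K_{i,k}|>\sum_{m\le k}|K_{j,m}|$. This is where the factorial growth rate is really used, through $\delta_{i,j}k!-1>6(k-1)!$ for large $k$. It forces some image point into a block $K_{j,\ell}$ with $\ell>k$, so both $I_k$ and $I_\ell$ are fresh and can simply be set to $1$ and $0$. You need only $|J^i_k|>|J^j_k|$, which follows from $\delta_{i,j}k!>1$. You allow the escaped point to land in an earlier, possibly already fixed block, and use only the freshness of $I_k$ to disagree with it.

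Your route is shorter and shows that factorial growth is inessential: any computable block sizes tending to infinity would give the same theorem. The paper's stronger inequality buys a uniform outcome for every diagonalization, namely a fresh $1$-block followed by a later fresh $0$-block. The nonrecursiveness arguments in Corollaries~\ref{cor:profile-combined} and~\ref{cor:profile-universal-poset} reuse exactly this to get infinitely many $1$-blocks and $0$-blocks. With your assignment rule that downstream argument needs a small extra step, for instance also fixing one fresh $1$-block and one fresh $0$-block at each stage.
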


\begin{proof}
For each $k\in\mathbb N$, let
\[
a_0=0,\quad a_{k+1}=a_k+k!,\quad I_k=[a_k,a_{k+1}).
\]
Thus $\mathbb N$ is partitioned into contiguous blocks $\{I_k\}_{k\in\mathbb N}$ with $|I_k|=k!$. We construct $A\subseteq\mathbb N$ so that it is constant on each $I_k$.

For each $i,k\in\mathbb N$, define
\[
\ell_i(k)=\lfloor u_i(k)\,k!\rfloor.
\]
Since $1<u_i(k)<2$, we have
\[
k!\le \ell_i(k)<2k!
\quad\text{for all }i,k.
\]
Now let
\[
b^i_0=0,\quad b^i_{k+1}=b^i_k+\ell_i(k),\quad K_{i,k}=[b^i_k,b^i_{k+1}).
\]
Thus for each fixed $i$, the family $\{K_{i,k}\}_{k\in\mathbb N}$ is a computable partition of $\mathbb N$, and
\[
|K_{i,k}|=\ell_i(k).
\]

Define a total computable function $h_i:\mathbb N\to\mathbb N$ blockwise as follows: if
$x=b^i_k+r$ with $0\le r<\ell_i(k)$, set
\[
h_i(x)=a_k+\left\lfloor \frac{r\,k!}{\ell_i(k)}\right\rfloor.
\]
For fixed $i,k$, consider the function
\[
g_{i,k}(r)=\left\lfloor \frac{r\,k!}{\ell_i(k)}\right\rfloor
\quad(0\le r<\ell_i(k)).
\]
Since $k!\le \ell_i(k)$, we have $k!/\ell_i(k)\le 1$, so $g_{i,k}$ is nondecreasing and
\begin{equation}\label{eqn:g_i_k-increasing-by-step-1}
g_{i,k}(r+1)-g_{i,k}(r)\le 1
\quad\text{for all }r<\ell_i(k)-1.
\end{equation}
Also,
\[
g_{i,k}(0)=0
\]
and
\[
g_{i,k}(\ell_i(k)-1)
=
\left\lfloor \frac{(\ell_i(k)-1)k!}{\ell_i(k)}\right\rfloor
=
\left\lfloor k!-\frac{k!}{\ell_i(k)}\right\rfloor
=
k!-1.
\]
Since \(g_{i,k}\) is nondecreasing, integer-valued, and changes by at most \(1\)
at each step by \eqref{eqn:g_i_k-increasing-by-step-1}, its range is exactly
\(\{0,1,\dots,k!-1\}\). Since
\[
h_i(b^i_k+r)=a_k+g_{i,k}(r)
\qquad(0\le r<\ell_i(k)),
\]
it follows that
\[
h_i[K_{i,k}]
=
\{a_k+q:0\le q<k!\}
=
I_k.
\]
Since \(h_i[K_{i,k}]=I_k\) for every \(k\) and the blocks \(I_k\) partition
\(\mathbb N\), the function \(h_i\) is surjective.

Moreover, for each $q<k!$, the fiber $g_{i,k}^{-1}(q)$ consists of those $r$ such that
\[
q\le \frac{r\,k!}{\ell_i(k)}<q+1,
\]
that is,
\[
\frac{q\,\ell_i(k)}{k!}\le r<\frac{(q+1)\ell_i(k)}{k!}.
\]
This interval has length
\[
\frac{\ell_i(k)}{k!}<2,
\]
so it contains at most two integers.

Thus every fiber of the restriction
\[
h_i\!\upharpoonright K_{i,k}: K_{i,k}\to I_k
\]
has size at most $2$. Now let $y\in\mathbb N$, and let $k$ be the unique index
such that $y\in I_k$. Since $h_i(K_{i,m})\subseteq I_m$ for every $m$, and the
blocks $I_m$ are pairwise disjoint, any preimage of $y$ under $h_i$ must lie in
$K_{i,k}$. Hence
\[
|h_i^{-1}(y)|\le 2.
\]
Hence each $h_i$ is bounded finite-one with global bound $2$, proving
item~\ref{itm:profile-bfo}.

Finally, since $(i,k)\mapsto \ell_i(k)$ is computable, the endpoints
$(i,k)\mapsto b^i_k$ are uniformly computable. Therefore, given $(i,x)$, we can
effectively find the unique $k$ such that $x\in K_{i,k}$ and then compute
$h_i(x)$. So the family $\{h_i\}_{i\in\mathbb N}$ is uniformly computable.

Once $A$ has been constructed, define
\[
C_i(x)=A(h_i(x)).
\]
We first verify item~\ref{itm:profile-def-Ci}, which holds for every set $A$,
and item~\ref{itm:profile-positive}, which holds for any set $A$ that is
constant on each block $I_k$.

Since $h_i$ is total computable and bounded finite-one,
\[
C_i\le_{\rm bfo}A.
\]
Conversely, because $h_i$ is computable and surjective, the function
\[
r_i(y)=\mu x\,[h_i(x)=y]
\]
is total computable. It is injective, since two distinct values of $y$ cannot have the same least preimage. Therefore
\[
C_i(r_i(y))=A(h_i(r_i(y)))=A(y),
\]
so $A\le_1 C_i$. Hence
\[
C_i\equiv_{\rm bfo}A
\quad\text{for every }i\in\mathbb N.
\]
Thus item~\ref{itm:profile-def-Ci} holds for every set $A$.

Now suppose that $u_i(k)\le u_j(k)$ for every $k$. Then $\ell_i(k)\le \ell_j(k)$ for every $k$.
Define a total computable function $f_{i,j}:\mathbb N\to\mathbb N$ blockwise by
\[
f_{i,j}(b^i_k+r)=b^j_k+r
\quad(0\le r<\ell_i(k)).
\]
Since the partition $\{K_{i,k}\}_{k\in\mathbb N}$ is computable, given $x$ we can
find the unique $k$ and $r$ such that
\[
x=b^i_k+r,\qquad 0\le r<\ell_i(k).
\]
Because $\ell_i(k)\le \ell_j(k)$, the value
\[
f_{i,j}(x)=b^j_k+r
\]
is well-defined. On each block $K_{i,k}$ the function $f_{i,j}$ is a translation, and
distinct blocks are mapped into distinct blocks, so $f_{i,j}$ is injective.

If $x\in K_{i,k}$, then $f_{i,j}(x)\in K_{j,k}$, so
\[
h_i(x)\in I_k
\quad\text{and}\quad
h_j(f_{i,j}(x))\in I_k.
\]
Since $A$ is constant on $I_k$ and both $h_i(x)$ and $h_j(f_{i,j}(x))$ belong to
$I_k$, we have
\[
C_i(x)=A(h_i(x))=A(h_j(f_{i,j}(x)))=C_j(f_{i,j}(x)).
\]
Thus
\[
C_i\le_1 C_j.
\]
This proves that item~\ref{itm:profile-positive} holds for any $A$ that is constant on each block $I_k$.

It remains to construct $A$ so as to satisfy \ref{itm:profile-constant} and \ref{itm:profile-negative}. For every $e\in\mathbb N$ and every $(i,j)\in D$, let
\[
R_{e,i,j}:
\varphi_e \text{ is not a total one-one reduction from } C_i \text{ to } C_j.
\]
Fix an effective enumeration of all such requirements.

We build $A$ in stages using $\emptyset'$. Let $M_s$ denote the largest block
index already defined before stage $s$, with $M_0=-1$. Thus, before stage $s$,
every block $I_m$ with $m\le M_s$ has already been assigned a value. At the end
of stage $s$, every block $I_m$ with $m\le M_{s+1}$ will have been assigned a
value, and later stages will never change these values.

Suppose the $s$-th requirement is $R_{e,i,j}$. We search for a number $t\in\mathbb N$ such that, writing
\[
k=s_{i,j}(t),
\]
we have
\begin{equation}\label{eq:profile-domination}
k>M_s
\text{ and }
|K_{i,k}|>\sum_{m=0}^{k}|K_{j,m}|.
\end{equation}
This search terminates. Indeed, for every $k\in S_{i,j}$, by \eqref{eq:profile-gap},
\begin{equation}\label{eqn:K_i-K_j}
|K_{i,k}|-|K_{j,k}|
=
\lfloor u_i(k)k!\rfloor-\lfloor u_j(k)k!\rfloor
\ge \delta_{i,j}k!-1.
\end{equation}
On the other hand, for every $m$,
\[
|K_{j,m}|<2m!,
\]
so
\[
\sum_{m=0}^{k-1}|K_{j,m}|<2\sum_{m=0}^{k-1}m!.
\]
Since for $k\ge 1$,
\[
\sum_{m=0}^{k-1}m!\le (k-1)!\sum_{r=0}^{k-1}\frac1{r!}<3(k-1)!,
\]
we obtain
\begin{equation}\label{eqn:sum_less_than_6(k-1)!}
\sum_{m=0}^{k-1}|K_{j,m}|<6(k-1)!.
\end{equation}
Because
\[
\delta_{i,j}k!=\delta_{i,j}k(k-1)!,
\]
the quantity $\delta_{i,j}k!-1$ eventually dominates $6(k-1)!$, so by \eqref{eqn:K_i-K_j} and \eqref{eqn:sum_less_than_6(k-1)!} we obtain
\[
|K_{i,k}|-|K_{j,k}|>\sum_{m<k}|K_{j,m}|
\]
which implies
\[
|K_{i,k}|>|K_{j,k}|+\sum_{m<k}|K_{j,m}|
=\sum_{m\le k}|K_{j,m}|.
\]

Hence, for all sufficiently large $k\in S_{i,j}$, we have
\[
|K_{i,k}|>\sum_{m\le k}|K_{j,m}|,
\]
and also $k>M_s$.
Equation~\eqref{eq:profile-domination} therefore holds for all sufficiently large $k\in S_{i,j}$.

Since $S_{i,j}$ is infinite and computable, the search for $t$ terminates.

Having chosen such a $k$, we act as follows.

\begin{enumerate}[label=\textup{[Step \arabic*]}]
\item Use $\emptyset'$ to decide whether $\varphi_e(x)\downarrow$ for every
$x\in K_{i,k}$. If not, then $\varphi_e$ is not total, so $R_{e,i,j}$ is
already satisfied. In this case, for every previously undefined block $I_m$
with $M_s<m\le k$, define
\[
A(z)=0
\quad\text{for all }z\in I_m,
\]
and set $M_{s+1}=k$.

\item Suppose now that $\varphi_e(x)\downarrow$ for every $x\in K_{i,k}$. Since
$K_{i,k}$ is finite, we can compute all values $\varphi_e(x)$ for
$x\in K_{i,k}$. If these values are not pairwise distinct, then $\varphi_e$ is
not injective, so $R_{e,i,j}$ is satisfied. In that case, for every previously
undefined block $I_m$ with $M_s<m\le k$, define
\[
A(z)=0
\quad\text{for all }z\in I_m,
\]
and set $M_{s+1}=k$.

\item Suppose finally that $\varphi_e$ is total and injective on $K_{i,k}$.
By \eqref{eq:profile-domination},
\[
|K_{i,k}|>\left|\bigcup_{m\le k}K_{j,m}\right|.
\]
Since $\varphi_e$ is injective on $K_{i,k}$,
\[
|\varphi_e[K_{i,k}]|=|K_{i,k}|,
\]
so $\varphi_e[K_{i,k}]$ cannot be contained in $\bigcup_{m\le k}K_{j,m}$.

Hence there exists some $x\in K_{i,k}$ such that
\[
y=\varphi_e(x)\in K_{j,\ell}
\]
for some $\ell>k$.
Fix the least such $x$, and let $\ell$ be the corresponding index.
Define
\[
A(z)=1 \quad\text{for all }z\in I_k,
\quad
A(z)=0 \quad\text{for all }z\in I_\ell,
\]
and for every previously undefined block $I_m$ with $M_s<m<\ell$ and $m\neq k$,
define
\[
A(z)=0
\quad\text{for all }z\in I_m.
\]
Finally let $M_{s+1}=\ell$.
\end{enumerate}

We now verify the construction by checking that every requirement $R_{e,i,j}$ with $(i,j)\in D$ is satisfied.
Fix $e\in\mathbb{N}$ and $(i,j)\in D$.
Let $s$ be the stage at which the requirement $R_{e,i,j}$ is considered.
In Steps~1 and~2, the requirement $R_{e,i,j}$ is satisfied because
$\varphi_e$ is not total in the first case and not injective in the second.
In Step~3, with $x\in K_{i,k}$ and
$y=\varphi_e(x)\in K_{j,\ell}$, we have
\[
h_i(x)\in I_k
\quad\text{and}\quad
h_j(y)\in I_\ell.
\]
Since $A$ is constant with value $1$ on $I_k$ and constant with value $0$ on $I_\ell$, we obtain
\[
C_i(x)=A(h_i(x))=1
\quad\text{and}\quad
C_j(\varphi_e(x))=C_j(y)=A(h_j(y))=0.
\]
Hence
\[
C_i(x)\neq C_j(\varphi_e(x)),
\]
so $\varphi_e$ is not a one-one reduction from $C_i$ to $C_j$.
Since $e$ and $(i,j)\in D$ were arbitrary, every requirement $R_{e,i,j}$ is
satisfied. Therefore item~\ref{itm:profile-negative} holds.

At every stage, values are assigned only to blocks $I_m$ with $m>M_s$, so there
is no injury. Moreover, the sequence $(M_s)_{s\in\mathbb{N}}$ is strictly increasing, and at the end of stage $s$
all blocks $I_m$ with $m\le M_{s+1}$ have been defined. Hence every block $I_m$
is defined at some finite stage. Thus $A$ is total and constant on each block,
which gives item~\ref{itm:profile-constant}.

To compute $A(n)$ from $\emptyset'$, first compute the unique $m$ such that
$n\in I_m$. Then, using $\emptyset'$, simulate the construction stage by stage
until the end of some stage $s$ with $M_{s+1}\ge m$. By the no-injury property,
the value assigned to the block $I_m$ has then stabilized permanently, so we can
output $A(n)$. Therefore $A\le_T\emptyset'$. This completes the proof.

\end{proof}

\begin{remark}\label{rem:A-below-Ci}
In the situation of Theorem~\ref{thm:profile}, the $1$-degree of $A$ is a common lower bound of the family $\{\deg_1(C_i)\}_{i\in\mathbb{N}}$.
Indeed, for every $i$, the
least-preimage function
\[
r_i(y)=\mu x\,[h_i(x)=y]
\]
is total computable and injective, and satisfies
\[
A(y)=C_i(r_i(y)).
\]
Hence
\[
A\le_1 C_i
\qquad\text{for every }i\in\mathbb N.
\]
\end{remark}

The next two corollaries illustrate the two basic regimes of the profile method.
Constant profiles yield monotone comparison patterns and hence a dense rational
chain, whereas oscillating profiles create local dominance in opposite
directions and therefore force pairwise incomparability.

\begin{corollary}[Constant profiles]\label{cor:profile-rationals}
Fix an injective computable enumeration
\[
q_0,q_1,\dots
\]
of $\mathbb Q\cap(1,2)$, and define
\[
u_i(k)=q_i
\quad\text{for all }i,k\in\mathbb N.
\]
Let
\[
D=\{(i,j)\in\mathbb N^2:q_i>q_j\}.
\]
For $(i,j)\in D$, define
\[
\delta_{i,j}=q_i-q_j
\quad\text{and}\quad
s_{i,j}(t)=t.
\]
Then Theorem~\ref{thm:profile} yields a set $A\le_T\emptyset'$ and sets
\[
B_{q_i}=C_i
\]
such that
\[
B_{q_i}\equiv_{\rm bfo}A
\quad\text{for every }i\in\mathbb N,
\]
and
\[
B_{q_i}<_1 B_{q_j}
\quad\text{if and only if}\quad
q_i<q_j.
\]
Hence the $1$-degrees inside the bounded finite-one degree of $A$ contain a copy of $(\mathbb Q,\le)$.
\end{corollary}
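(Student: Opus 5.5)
The plan is to verify the hypotheses of Theorem~\ref{thm:profile} for the given data and then read off the conclusions. First, each $u_i$ is constant with value $q_i\in\mathbb Q\cap(1,2)$, so it is a block-density profile. The map $(i,k)\mapsto q_i$ is computable because the enumeration of $\mathbb Q\cap(1,2)$ is computable. The set $D$ is computable, since comparing two rationals given by indices is decidable. For $(i,j)\in D$, the number $\delta_{i,j}=q_i-q_j$ is a positive rational computed uniformly from $(i,j)$. The function $s_{i,j}(t)=t$ is computable and strictly increasing with range $\mathbb N$, which is infinite. The gap condition \eqref{eq:profile-gap} holds with equality at every $k$: $u_i(k)=q_i=q_j+\delta_{i,j}=u_j(k)+\delta_{i,j}$. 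So the theorem applies and gives $A\le_T\emptyset'$ together with sets $C_i$. Item~\ref{itm:profile-def-Ci} gives $B_{q_i}=C_i\equiv_{\rm bfo}A$ for every $i$.

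For the equivalence, I would first treat the forward direction. Suppose $q_i<q_j$. Then $u_i(k)\le u_j(k)$ for all $k$, so item~\ref{itm:profile-positive} gives $C_i\le_1 C_j$. Also $(j,i)\in D$, so item~\ref{itm:profile-negative} gives $C_j\not\le_1 C_i$. Hence $C_i<_1 C_j$; this is exactly the ``in particular'' clause of the theorem.

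For the converse, suppose $B_{q_i}<_1 B_{q_j}$, and split into the two remaining cases. If $i=j$, we would have $C_i<_1C_i$, which is absurd because $\le_1$ is reflexive. If $q_i>q_j$, then $(i,j)\in D$, so item~\ref{itm:profile-negative} gives $C_i\not\le_1 C_j$, which contradicts $C_i\le_1 C_j$. Since the enumeration is injective, $q_i=q_j$ forces $i=j$. Therefore $q_i<q_j$.

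It then follows that $q_i\mapsto \deg_1(B_{q_i})$ is injective: for $q_i\neq q_j$ one of the two sets is strictly below the other. The map is also strictly order-preserving. So it embeds $(\mathbb Q\cap(1,2),\le)$ into the $1$-degrees inside the bfo-degree of $A$, and $\mathbb Q\cap(1,2)\cong\mathbb Q$ by Cantor's theorem, which gives the final statement. There is no real obstacle here. The only point needing care is the uniform computability of $D$, $\delta_{i,j}$ and $s_{i,j}$, and this follows from the effective enumeration of the rationals.
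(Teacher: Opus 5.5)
Your proposal is correct and follows essentially the same route as the paper. You verify the hypotheses of Theorem~\ref{thm:profile}, obtain the forward direction from items~\ref{itm:profile-positive} and~\ref{itm:profile-negative}, and prove the converse by excluding $q_i>q_j$ via item~\ref{itm:profile-negative} and $q_i=q_j$ via injectivity of the enumeration; your observation that a strictly order-preserving map on a linear order is already an order-embedding replaces the paper's explicit check that $B_{q_i}\le_1 B_{q_j}$ if and only if $q_i\le q_j$.
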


\begin{proof}
The hypotheses of Theorem~\ref{thm:profile} are satisfied: the family
$u_i(k)=q_i$ is uniformly computable, the set
\[
D=\{(i,j)\in\mathbb N^2:q_i>q_j\}
\]
is computable, for each $(i,j)\in D$ the rational
\[
\delta_{i,j}=q_i-q_j>0
\]
is uniformly computable, and
\[
s_{i,j}(t)=t
\]
is strictly increasing, computable, and has infinite range. Moreover, for every
$(i,j)\in D$ and every $k$,
\[
u_i(k)=q_i=q_j+(q_i-q_j)=u_j(k)+\delta_{i,j}.
\]
It follows from Theorem~\ref{thm:profile} that there exist a set $A\le_T\emptyset'$ and sets
$C_i$ such that $C_i\equiv_{\rm bfo}A$ for every $i$. Define
\[
B_{q_i}=C_i.
\]

If $q_i<q_j$, then $u_i(k)\le u_j(k)$ for every $k$, so
\[
B_{q_i}\le_1 B_{q_j}
\]
by item~\ref{itm:profile-positive} of Theorem~\ref{thm:profile}. Since
$q_j>q_i$, we have $(j,i)\in D$, and therefore
\[
B_{q_j}\not\le_1 B_{q_i}
\]
by item~\ref{itm:profile-negative}. Hence
\[
B_{q_i}<_1 B_{q_j}.
\]

Conversely, suppose that
\[
B_{q_i}<_1 B_{q_j}.
\]
Then $B_{q_i}\le_1 B_{q_j}$. If $q_i>q_j$, then $(i,j)\in D$, and
item~\ref{itm:profile-negative} would give
\[
B_{q_i}\not\le_1 B_{q_j},
\]
a contradiction. If $q_i=q_j$, then since the enumeration $(q_i)_{i\in\mathbb N}$
is injective, we must have $i=j$, hence $B_{q_i}=B_{q_j}$, contradicting
$B_{q_i}<_1 B_{q_j}$. Therefore $q_i<q_j$.

Thus
\[
B_{q_i}<_1 B_{q_j}
\quad\text{if and only if}\quad
q_i<q_j.
\]
Moreover,
\[
B_{q_i}\le_1 B_{q_j}\quad\Longleftrightarrow\quad q_i\le q_j.
\]
Indeed, the implication ``$\Rightarrow$'' follows from item~\ref{itm:profile-negative}, since \(q_i>q_j\) would imply \((i,j)\in D\), while ``$\Leftarrow$'' follows from item~\ref{itm:profile-positive} when \(q_i<q_j\), and is trivial when \(q_i=q_j\). Therefore the function
\[
q\longmapsto \deg_1(B_q)
\]
is an order-embedding of \((\mathbb Q\cap(1,2),\le)\) into the poset of \(1\)-degrees inside the bounded finite-one degree of \(A\).

Since each $B_{q_i}\equiv_{\rm bfo}A$, the corresponding $1$-degrees all lie inside
the bounded finite-one degree of $A$.
The corresponding $1$-degrees therefore form a copy of
$(\mathbb Q\cap(1,2),\le)$, and hence a copy of $(\mathbb Q,\le)$.

\end{proof}

The previous corollary used constant profiles to obtain a dense chain. We now
turn to the complementary phenomenon: by letting the profiles oscillate between
two rational densities on pairwise disjoint computable supports, we force
mutual incomparability and thereby obtain an infinite antichain of \(1\)-degrees.

\begin{corollary}[Oscillating profiles]\label{cor:profile-antichain}
Fix rationals $1<\alpha<\beta<2$, and for each $n\in\mathbb N$ let
\[
Z_n=\{2^n(2t+1):t\in\mathbb N\}.
\]
Then the sets $Z_n$ are pairwise disjoint, infinite, and uniformly computable.
Define, for each $n,k\in\mathbb N$,
\[
u_n(k)=
\begin{cases}
\beta & \text{if }k\in Z_n,\\
\alpha & \text{if }k\notin Z_n.
\end{cases}
\]
Let
\[
D=\{(n,m)\in\mathbb N^2:n\neq m\}.
\]
For $n\neq m$, define
\[
\delta_{n,m}=\beta-\alpha
\quad\text{and}\quad
s_{n,m}(t)=2^n(2t+1).
\]
Then Theorem~\ref{thm:profile} yields a set $A\le_T\emptyset'$ and sets
\[
W_n=C_n
\]
such that
\[
W_n\equiv_{\rm bfo}A
\quad\text{for every }n\in\mathbb N,
\]
and
\[
W_n\not\le_1 W_m
\quad\text{whenever }n\neq m.
\]
Hence $\{\deg_1(W_n)\}_{n\in\mathbb N}$ is an infinite antichain of $1$-degrees inside a single bounded finite-one degree.
\end{corollary}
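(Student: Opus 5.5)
The plan is to verify the hypotheses of Theorem~\ref{thm:profile} for the given data and then read off the conclusion from items~\ref{itm:profile-def-Ci} and~\ref{itm:profile-negative}. This mirrors the proof of Corollary~\ref{cor:profile-rationals}. Only the negative item is needed here; item~\ref{itm:profile-positive} plays no role.

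First I check the combinatorial facts about the sets $Z_n$. Every positive integer $k$ factors uniquely as $k=2^n(2t+1)$, so $Z_n$ is exactly the set of $k\ge1$ whose $2$-adic valuation equals $n$. Hence the $Z_n$ are pairwise disjoint and infinite, and $0$ lies in no $Z_n$. Membership $k\in Z_n$ is decidable uniformly in $(n,k)$ by computing the $2$-adic valuation of $k$ when $k>0$.

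It follows that $(n,k)\mapsto u_n(k)\in\{\alpha,\beta\}\subseteq\mathbb Q\cap(1,2)$ is computable, so the $u_n$ form a uniformly computable family of block-density profiles. The set $D=\{(n,m):n\neq m\}$ is computable. The rational $\delta_{n,m}=\beta-\alpha>0$ is trivially uniformly computable. The map $s_{n,m}(t)=2^n(2t+1)$ is computable uniformly in $(n,m)$ and strictly increasing in $t$, with infinite range $S_{n,m}=Z_n$.

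Next I check the gap condition \eqref{eq:profile-gap}, which is the only substantive step. Let $n\neq m$ and $k\in S_{n,m}=Z_n$. Then $u_n(k)=\beta$. Disjointness gives $k\notin Z_m$, so $u_m(k)=\alpha$. Therefore
\[
u_n(k)=\beta=\alpha+(\beta-\alpha)=u_m(k)+\delta_{n,m}.
\]
The main point is simply that disjointness of the supports converts the local dominance of $u_n$ on its own support into the required gap against every other profile.

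Finally, Theorem~\ref{thm:profile} yields $A\le_T\emptyset'$ and sets $C_n$; put $W_n=C_n$. Item~\ref{itm:profile-def-Ci} gives $W_n\equiv_{\rm bfo}A$ for every $n$. For $n\neq m$ we have $(n,m)\in D$, so item~\ref{itm:profile-negative} gives $W_n\not\le_1 W_m$. Since also $(m,n)\in D$, the two sets are $1$-incomparable. In particular they have distinct $1$-degrees, so $\{\deg_1(W_n)\}_{n\in\mathbb N}$ is an infinite family of pairwise incomparable $1$-degrees, all inside the bounded finite-one degree of $A$. I expect no real obstacle here; the only care needed is confirming the uniformity and the strict monotonicity of $s_{n,m}$.
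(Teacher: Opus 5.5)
Your proposal is correct and follows the paper's proof essentially step by step. You verify the hypotheses of Theorem~\ref{thm:profile} (uniform computability of the $Z_n$ and $u_n$, computability of $D$, and $s_{n,m}$ strictly increasing with range $Z_n$), obtain the gap condition from the disjointness of the $Z_n$, and read off the conclusion from items~\ref{itm:profile-def-Ci} and~\ref{itm:profile-negative}, noting as the paper implicitly does that $(m,n)\in D$ as well, so the $1$-degrees are pairwise distinct and incomparable.
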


\begin{proof}

For each $n\in\mathbb N$,
\[
Z_n=\{2^n(2t+1):t\in\mathbb N\}
\]
is infinite, and the sets $Z_n$ are pairwise disjoint. The family $\{Z_n\}_{n\in\mathbb N}$ is
uniformly computable, since
\[
k\in Z_n \iff k>0,\ 2^n\mid k,\ \text{and }2^{n+1}\nmid k.
\]
It follows that the family of profiles $\{u_n\}_{n\in\mathbb N}$ is uniformly
computable. Moreover,
\[
D=\{(n,m)\in\mathbb N^2:n\neq m\}
\]
is computable. For each $(n,m)\in D$, the rational
\[
\delta_{n,m}=\beta-\alpha>0
\]
is uniformly computable, and
\[
s_{n,m}(t)=2^n(2t+1)
\]
is a strictly increasing computable function with infinite range
\[
S_{n,m}=Z_n.
\]

Now fix $(n,m)\in D$, and let $k\in S_{n,m}=Z_n$. Since $n\neq m$ and the sets
$Z_n$ are pairwise disjoint, we have $k\notin Z_m$. Hence
\[
u_n(k)=\beta
\quad\text{and}\quad
u_m(k)=\alpha.
\]
Therefore
\[
u_n(k)=\beta=\alpha+(\beta-\alpha)=u_m(k)+\delta_{n,m},
\]
so condition~\eqref{eq:profile-gap} holds.

Applying Theorem~\ref{thm:profile} and writing
\[
W_n:=C_n \qquad(n\in\mathbb N),
\]
we obtain a set \(A\le_T\emptyset'\) and sets \(W_n\) such that
\[
W_n\equiv_{\rm bfo}A
\quad\text{for every }n\in\mathbb N,
\]
and
\[
W_n\not\le_1 W_m
\quad\text{whenever }n\neq m.
\]
Thus the sets \(W_n\) are pairwise incomparable under \(\le_1\), so the
corresponding \(1\)-degrees
\[
\{\deg_1(W_n):n\in\mathbb N\}
\]
form an infinite antichain. Since each \(W_n\equiv_{\rm bfo}A\), all these
\(1\)-degrees lie inside the single bounded finite-one degree of \(A\).

\end{proof}

The previous two corollaries isolate the two basic effects of the profile
method: constant profiles yield a dense chain, while oscillating profiles force
pairwise incomparability. The next corollary shows that these are not separate
phenomena attached to different witnesses. By combining the two profile
patterns in a single construction, we obtain one bounded finite-one degree that
simultaneously contains both a copy of \((\mathbb Q,\le)\) and an infinite
antichain of \(1\)-degrees.

\begin{corollary}[One degree containing both structures]\label{cor:profile-combined}
There exist a nonrecursive set $A\le_T\emptyset'$, a family $\{B_q\}_{q\in\mathbb Q\cap(1,2)}$, and a family $\{W_n\}_{n\in\mathbb N}$ such that
\begin{enumerate}[label=\textup{(\roman*)}]
\item $B_q\equiv_{\rm bfo}A$ for every $q\in\mathbb Q\cap(1,2)$;
\item $W_n\equiv_{\rm bfo}A$ for every $n\in\mathbb N$;
\item $B_p<_1 B_q$ whenever $p,q\in\mathbb Q\cap(1,2)$ and $p<q$;
\item $W_n\not\le_1 W_m$ whenever $n\neq m$.
\end{enumerate}
Consequently, the bounded finite-one degree of $A$ contains both a copy of $(\mathbb Q,\le)$ and an infinite antichain of $1$-degrees.
\end{corollary}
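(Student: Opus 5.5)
We will apply Theorem~\ref{thm:profile} once, to a single family of profiles obtained by interleaving the constant profiles of Corollary~\ref{cor:profile-rationals} with the oscillating profiles of Corollary~\ref{cor:profile-antichain}. Fix an injective computable enumeration \(q_0,q_1,\dots\) of \(\mathbb Q\cap(1,2)\). Fix rationals \(1<\alpha<\beta<2\), and let \(Z_n=\{2^n(2t+1):t\in\mathbb N\}\). Set \(u_{2i}(k)=q_i\) for all \(k\). Set \(u_{2n+1}(k)=\beta\) if \(k\in Z_n\) and \(u_{2n+1}(k)=\alpha\) otherwise. This family is uniformly computable.

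Next we define \(D\) as the union of two parts. The first part is \(\{(2i,2j):q_i>q_j\}\), with \(\delta=q_i-q_j\) and \(s(t)=t\). The second part is \(\{(2n+1,2m+1):n\neq m\}\), with \(\delta=\beta-\alpha\) and \(s(t)=2^n(2t+1)\). This \(D\) is computable, and the data \(\delta\), \(s\) are uniformly computable, since we can decide parity and which part a pair lies in. The gap condition \eqref{eq:profile-gap} was already checked for each part in the two earlier corollaries, and it involves only the pair concerned. No gap conditions between even and odd indices are needed, because we never put such pairs into \(D\).

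The theorem then yields \(A\le_T\emptyset'\) and sets \(C_i\). Put \(B_{q_i}=C_{2i}\) and \(W_n=C_{2n+1}\). Items (i) and (ii) follow from item~\ref{itm:profile-def-Ci}. Item (iii) follows exactly as in the proof of Corollary~\ref{cor:profile-rationals}, via items~\ref{itm:profile-positive} and~\ref{itm:profile-negative}. Item (iv) is item~\ref{itm:profile-negative} applied to the pair \((2n+1,2m+1)\in D\).

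The one new ingredient, and the main point to check, is that \(A\) is nonrecursive; the theorem does not state this. The most direct route is to exhibit one pair in \(D\), say \((2n+1,2m+1)\) with \(n\neq m\), so that \(W_n\not\le_1 W_m\). If \(A\) were recursive, then so would be every \(W_n\), since \(W_n(x)=A(h_{2n+1}(x))\) with \(h_{2n+1}\) computable. Any two recursive sets \(X,Y\) such that both \(Y\) and \(\overline Y\) are infinite satisfy \(X\le_1 Y\). One maps \(x\) injectively to the next unused element of \(Y\) or of \(\overline Y\), according to \(X(x)\); both sides are searchable because \(Y\) is recursive. So we must check that \(W_m\) and its complement are infinite. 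This follows from the construction: in Step~3 every stage creates a block with value \(1\) and a block with value \(0\), and in Steps 1 and 2 blocks receive value \(0\). Step~3 occurs infinitely often, since the requirements \(R_{e,i,j}\) for total injective \(\varphi_e\) recur infinitely often. Hence \(A\) takes both values on infinitely many blocks, and because \(h_{2m+1}\) maps onto each block, \(W_m\) and \(\overline{W_m}\) are both infinite.

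The alternative is a cheaper uniform observation that avoids inspecting the construction. If \(A\) is recursive and \(A\) or \(\overline A\) is finite, then \(W_n\) or its complement is finite, and every \(W_n\) has the same finite cardinality. One then checks directly that \(\le_1\) holds between such sets (a recursive set with finite part of size \(c\) versus one with the same size and both infinite complements). This again contradicts \(W_n\not\le_1 W_m\). Either way we obtain a contradiction, so \(A\) is nonrecursive.
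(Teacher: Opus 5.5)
Your proposal is correct and is essentially the paper's proof: you use the same interleaving ($u_{2i}\equiv q_i$, and $u_{2n+1}$ oscillating on $Z_n$), the same $D$, $\delta$ and $s$, and the same nonrecursiveness argument via infinitely many Step~3 stages. The only differences are cosmetic: you contradict $W_n\not\le_1 W_m$ using the slightly sharper fact that $X\le_1 Y$ whenever $X,Y$ are recursive and $Y$ is infinite and coinfinite, where the paper contradicts $B_{q_i}<_1 B_{q_j}$, and you get infinitely many diagonalization stages from infinitely many total injective indices rather than from $e_{\mathrm{id}}$ over infinitely many pairs. Your closing ``alternative'' is unnecessary and, as stated, false: for finite $A$, $|W_n|=\sum_{I_k\subseteq A}|K_{2n+1,k}|$ depends on $n$ through $u_{2n+1}$, so the $W_n$ need not share a cardinality. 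It is easily repaired, since any two finite sets (resp.\ any two cofinite sets) are $\le_1$-comparable, so $W_n\le_1 W_m$ or $W_m\le_1 W_n$ would hold, contradicting (iv).
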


\begin{proof}
Fix an injective computable enumeration
\[
q_0,q_1,\dots
\]
of $\mathbb Q\cap(1,2)$, and fix rationals $1<\alpha<\beta<2$.
For each $n\in\mathbb N$, let
\[
Z_n=\{2^n(2t+1):t\in\mathbb N\}.
\]

Define a uniformly computable family of profiles $\{u_r\}_{r\in\mathbb N}$ by
\[
u_{2i}(k)=q_i
\quad(i,k\in\mathbb N),
\]
and
\[
u_{2n+1}(k)=
\begin{cases}
\beta & \text{if }k\in Z_n,\\
\alpha & \text{if }k\notin Z_n
\end{cases}
\quad(n,k\in\mathbb N).
\]

Let $D$ consist of the following pairs:
\begin{enumerate}[label=\textup{(\alph*)}]
\item every pair $(2i,2j)$ such that $q_i>q_j$;
\item every pair $(2n+1,2m+1)$ such that $n\neq m$.
\end{enumerate}
For pairs of type \textup{(a)}, define
\[
\delta_{2i,2j}=q_i-q_j
\quad\text{and}\quad
s_{2i,2j}(t)=t.
\]
For pairs of type \textup{(b)}, define
\[
\delta_{2n+1,2m+1}=\beta-\alpha
\quad\text{and}\quad
s_{2n+1,2m+1}(t)=2^n(2t+1).
\]
No relation is imposed between even and odd indices.
We claim that the hypotheses of Theorem~\ref{thm:profile} are satisfied.
The family
$\{u_r\}_{r\in\mathbb N}$ is uniformly computable: for even indices,
$u_{2i}(k)=q_i$, and for odd indices, $u_{2n+1}(k)$ is determined by the
uniformly computable predicate $k\in Z_n$. The set $D$ is computable. For
pairs of type \textup{(a)}, the rational $\delta_{2i,2j}=q_i-q_j>0$ is
uniformly computable, $s_{2i,2j}(t)=t$ is strictly increasing and computable
with infinite range, and for every $k$,
\[
u_{2i}(k)=q_i=q_j+(q_i-q_j)=u_{2j}(k)+\delta_{2i,2j}.
\]
For pairs of type \textup{(b)}, the rational
$\delta_{2n+1,2m+1}=\beta-\alpha>0$ is uniformly computable, the function
$s_{2n+1,2m+1}(t)=2^n(2t+1)$ is strictly increasing and computable with
infinite range $Z_n$, and if $k\in Z_n$, then $k\notin Z_m$, so
\[
u_{2n+1}(k)=\beta=\alpha+(\beta-\alpha)=u_{2m+1}(k)+\delta_{2n+1,2m+1}.
\]
Thus condition~\eqref{eq:profile-gap} holds for every pair in $D$.
Applying Theorem~\ref{thm:profile} and writing
\[
B_{q_i}=C_{2i},
\quad
W_n=C_{2n+1},
\]
we obtain
\[
B_{q_i}\equiv_{\rm bfo}A
\quad\text{and}\quad
W_n\equiv_{\rm bfo}A
\]
for all $i,n$. By items~\ref{itm:profile-positive} and~\ref{itm:profile-negative} of Theorem~\ref{thm:profile},
\[
B_{q_i}<_1 B_{q_j}
\quad\text{whenever }q_i<q_j,
\]
and
\[
W_n\not\le_1 W_m
\quad\text{whenever }n\neq m.
\]
Thus the bounded finite-one degree of $A$ contains both a copy of $(\mathbb Q,\le)$ and an infinite antichain of $1$-degrees.

It remains to show that the set $A$ produced by the construction in the proof
of Theorem~\ref{thm:profile} is nonrecursive. Fix an index $e_{\mathrm{id}}$
for the identity function. For every pair $i,j\in\mathbb N$ with $q_i>q_j$,
the requirement $R_{e_{\mathrm{id}},\,2i,\,2j}$ occurs in that construction.
Its vacuous cases are impossible, since the identity is total and injective on
every finite set. Hence the construction reaches the diagonalization case, and
for fresh blocks $I_k<I_\ell$ it assigns
\[
A(z)=1 \text{ for all } z\in I_k,
\quad
A(z)=0 \text{ for all } z\in I_\ell.
\]
Since there are infinitely many pairs $(i,j)$ with $q_i>q_j$, this happens
infinitely often. Therefore $A$ has infinitely many $1$-blocks and infinitely
many $0$-blocks, so $A$ is infinite and coinfinite.
For each \(i\) and each \(k\), the block \(K_{2i,k}\) is nonempty and
\[
h_{2i}[K_{2i,k}]=I_k.
\]
Since \(A\) is constant on \(I_k\), it follows that \(B_{q_i}(x)=A(h_{2i}(x))\)
is constant on \(K_{2i,k}\), with the same value that \(A\) takes on \(I_k\).
As \(A\) has infinitely many \(1\)-blocks and infinitely many \(0\)-blocks, the
set \(B_{q_i}\) also has infinitely many \(1\)-blocks and infinitely many
\(0\)-blocks. Hence \(B_{q_i}\) is infinite and coinfinite.

If $A$ were recursive, then every $B_{q_i}$ would be recursive.
But any two infinite coinfinite recursive sets are one-one equivalent.
This contradicts the fact that
\[
B_{q_i}<_1 B_{q_j}
\quad\text{whenever }q_i<q_j.
\]
Hence $A$ is nonrecursive.

\end{proof}

\begin{remark}
By Remark~\ref{rem:A-below-Ci}, the \(1\)-degree of \(A\) is a common lower bound
of the explicitly constructed degrees. More precisely, in
Corollaries~\ref{cor:profile-rationals} and~\ref{cor:profile-combined} one has
\[
A<_1 B_q
\qquad(q\in\mathbb Q\cap(1,2)),
\]
while in Corollaries~\ref{cor:profile-antichain}
and~\ref{cor:profile-combined} one has
\[
A<_1 W_n
\qquad(n\in\mathbb N).
\]
Indeed, if \(B_q\le_1 A\), choose \(p<q\). Then
\[
A\le_1 B_p<_1 B_q\le_1 A,
\]
a contradiction. Similarly, if \(W_n\le_1 A\), then for any \(m\neq n\),
\[
W_n\le_1 A\le_1 W_m,
\]
contradicting \(W_n\not\le_1 W_m\).
Thus both the copy of \((\mathbb Q,\le)\) and the infinite antichain constructed
in Corollary~\ref{cor:profile-combined} lie strictly above the \(1\)-degree of \(A\).
\end{remark}

\begin{remark}\label{rem:scope}
Corollary~\ref{cor:profile-combined} concerns the specific bounded finite-one degree produced by the combined family of constant and oscillating profiles. It does not assert that every bounded finite-one degree containing a dense chain of $1$-degrees must also contain an infinite antichain.
\end{remark}

\begin{remark}\label{rem:not-m-rigid}
The witness $A$ from Corollary~\ref{cor:profile-combined} is not $m$-rigid. Indeed, let $I_k=[a_k,a_{k+1})$ be the computable target blocks from the construction, and recall that $A$ is constant on each $I_k$. Define $f:\mathbb N\to\mathbb N$ by
\[
f(x)=a_k
\quad\text{whenever }x\in I_k.
\]
Then $f$ is total computable, and for every $x\in I_k$ one has
\[
x\in A \iff a_k\in A \iff f(x)\in A.
\]
Thus $f$ is a computable $m$-autoreduction of $A$. However, for every $k\ge 2$ and every $x\in I_k\setminus\{a_k\}$, we have $f(x)\neq x$. Hence $f$ is not eventually the identity, so $A$ is not $m$-rigid.
\end{remark}

\subsection{Universal order embeddings}

The preceding corollaries show that the profile method can realize both dense
chains and infinite antichains inside a single bounded finite-one degree. We now
turn to its most general consequence. For each element \(i\), we encode its
principal lower set
\[
\downarrow i=\{n\in\mathbb N : n\preceq i\}
\]
into a computable block profile. In this way, pointwise domination of profiles
mirrors inclusion of lower sets, and hence the original order relation.

The next corollary may be viewed as the universal form of the oscillating-profile
construction. The pairwise disjoint computable sets \(Z_n\) serve as independent
markers for the elements \(n\) of the underlying partial order, and the profile
\(u_i\) records exactly which markers belong to \(\downarrow i\).

\begin{corollary}[Computable partial orders]\label{cor:profile-computable-posets}
Let $\preceq$ be a computable partial order on $\mathbb N$. Then there exist a
set $A\le_T\emptyset'$ and sets
\[
B_0,B_1,\dots
\]
such that
\[
B_i\equiv_{\rm bfo}A
\qquad\text{for every }i\in\mathbb N,
\]
and
\[
B_i\le_1 B_j
\quad\text{if and only if}\quad
i\preceq j.
\]
Hence the function
\[
i\longmapsto \deg_1(B_i)
\]
is an order-embedding of $(\mathbb N,\preceq)$ into the partially ordered set of
$1$-degrees contained in the bounded finite-one degree of $A$.
\end{corollary}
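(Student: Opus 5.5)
We apply Theorem~\ref{thm:profile} with profiles that encode principal lower sets, as indicated before the statement. Fix rationals $1<\alpha<\beta<2$ and the pairwise disjoint computable sets $Z_n=\{2^n(2t+1):t\in\mathbb N\}$ from Corollary~\ref{cor:profile-antichain}. For $i,k\in\mathbb N$ we set
\[
u_i(k)=\begin{cases}\beta & \text{if } k\in Z_n \text{ for some } n\preceq i,\\ \alpha & \text{otherwise.}\end{cases}
\]
Every $k>0$ lies in exactly one $Z_n$, namely with $n$ the $2$-adic valuation of $k$, and $k=0$ lies in none. So $u_i(k)$ is computed by finding this $n$ and deciding $n\preceq i$, which uses the computability of $\preceq$. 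Hence the family $\{u_i\}$ is uniformly computable with values in $\mathbb Q\cap(1,2)$.

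For the negative pairs we take $D=\{(i,j):i\not\preceq j\}$, which is computable. For $(i,j)\in D$ we put $\delta_{i,j}=\beta-\alpha$ and $s_{i,j}(t)=2^i(2t+1)$, so $S_{i,j}=Z_i$. The gap condition~\eqref{eq:profile-gap} holds: for $k\in Z_i$ we have $u_i(k)=\beta$ by reflexivity $i\preceq i$. Moreover $k$ belongs to no $Z_n$ with $n\neq i$, and $i\not\preceq j$, so $u_j(k)=\alpha$. Therefore $u_i(k)=u_j(k)+(\beta-\alpha)$. All the data are uniform in $(i,j)$.

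Theorem~\ref{thm:profile} then yields $A\le_T\emptyset'$ and sets $B_i:=C_i$ with $B_i\equiv_{\rm bfo}A$. Suppose $i\preceq j$. Then by transitivity $\downarrow i\subseteq\downarrow j$, so $u_i(k)=\beta$ forces $u_j(k)=\beta$, giving $u_i\le u_j$ pointwise. Item~\ref{itm:profile-positive} then gives $B_i\le_1 B_j$. Suppose instead $i\not\preceq j$. Then $(i,j)\in D$, and item~\ref{itm:profile-negative} gives $B_i\not\le_1 B_j$. This proves the equivalence $B_i\le_1 B_j\iff i\preceq j$.

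It remains to check that the map $i\mapsto\deg_1(B_i)$ is an order-embedding. It reflects and preserves the order by the equivalence just proved. It is injective by antisymmetry: if $\deg_1(B_i)=\deg_1(B_j)$, then $i\preceq j$ and $j\preceq i$, so $i=j$. The only delicate point is verifying the gap condition, which relies on reflexivity and on the disjointness of the $Z_n$; once that is in place, everything else follows directly from the Profile Theorem.
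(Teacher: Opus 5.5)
Your proposal is correct and follows essentially the same argument as the paper. It uses the same lower-set profiles built from the sets $Z_n$, the same $D=\{(i,j):i\not\preceq j\}$ with $\delta_{i,j}=\beta-\alpha$ and $s_{i,j}(t)=2^i(2t+1)$, the same gap check via reflexivity and disjointness, and the same positive direction via transitivity; your added remark that injectivity of $i\mapsto\deg_1(B_i)$ follows from antisymmetry is a harmless extra detail that the paper leaves implicit.
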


\begin{proof}
Fix rationals $1<\alpha<\beta<2$, and for each $n\in\mathbb N$ let
\[
Z_n=\{2^n(2t+1):t\in\mathbb N\}.
\]
Then the sets $Z_n$ are pairwise disjoint, infinite, and uniformly computable.

For each $i,k\in\mathbb N$, define
\[
u_i(k)=
\begin{cases}
\beta & \text{if }k\in Z_n\text{ for some }n\preceq i,\\
\alpha & \text{otherwise.}
\end{cases}
\]
Since $\preceq$ is computable and the sets \(Z_n\) form a uniformly computable
pairwise disjoint cover of \(\mathbb N\setminus\{0\}\), the family \(\{u_i\}_{i\in\mathbb N}\) is uniformly
computable.

Let
\[
D=\{(i,j)\in\mathbb N^2 : i\npreceq j\}.
\]
Since $\preceq$ is computable, so is $D$. For each $(i,j)\in D$, define
\[
\delta_{i,j}=\beta-\alpha
\qquad\text{and}\qquad
s_{i,j}(t)=2^i(2t+1).
\]
Then $s_{i,j}$ is strictly increasing, computable, and has infinite range
\[
S_{i,j}=Z_i.
\]
Now fix $(i,j)\in D$, and let $k\in S_{i,j}=Z_i$. Since $\preceq$ is reflexive,
we have $i\preceq i$, so
\[
u_i(k)=\beta.
\]
If \(u_j(k)=\beta\), then \(k\in Z_n\) for some \(n\preceq j\). Since also
\(k\in Z_i\) and the sets \(Z_n\) are pairwise disjoint, it would follow that
\(n=i\), hence \(i\preceq j\), a contradiction. Therefore \(u_j(k)=\alpha\).
It follows that
\[
u_i(k)=\beta=\alpha+(\beta-\alpha)=u_j(k)+\delta_{i,j},
\]
so condition~\eqref{eq:profile-gap} holds.

Apply Theorem~\ref{thm:profile} and write
\[
B_i=C_i
\qquad(i\in\mathbb N).
\]
Then
\[
B_i\equiv_{\rm bfo}A
\qquad\text{for every }i\in\mathbb N.
\]

If $i\preceq j$, then for every $k\in\mathbb N$ we have $u_i(k)\le u_j(k)$:
indeed, if $u_i(k)=\alpha$ there is nothing to prove, while if $u_i(k)=\beta$,
then $k\in Z_n$ for some $n\preceq i$, and by transitivity $n\preceq j$, so
$u_j(k)=\beta$. Hence, by item~\ref{itm:profile-positive} of
Theorem~\ref{thm:profile},
\[
B_i\le_1 B_j.
\]

Conversely, if $i\npreceq j$, then $(i,j)\in D$, and item~\ref{itm:profile-negative}
of Theorem~\ref{thm:profile} gives
\[
B_i\not\le_1 B_j.
\]

Therefore
\[
B_i\le_1 B_j
\qquad\text{if and only if}\qquad
i\preceq j.
\]
Since each \(B_i\equiv_{\rm bfo}A\), the corresponding \(1\)-degrees all lie inside
the bounded finite-one degree of \(A\). Therefore
\[
i\longmapsto \deg_1(B_i)
\]
is an order-embedding of \((\mathbb N,\preceq)\) into the partially ordered set of
\(1\)-degrees contained in the bounded finite-one degree of \(A\).

\end{proof}

The previous corollary gives effective embeddings of computable partial orders
into the \(1\)-degrees inside a single bounded finite-one degree. By combining
that result with Mostowski's theorem on the existence of a computable universal
partial order, we immediately obtain a noneffective universality statement for
all countable partial orders. The only additional point is to ensure that the
witnessing bounded finite-one degree can still be chosen nonrecursive.

\begin{corollary}[Universal countable partial orders]\label{cor:profile-universal-poset}
There exists a nonrecursive set $A\le_T\emptyset'$ such that every countable
partial order embeds into the partially ordered set of $1$-degrees contained in
the bounded finite-one degree of $A$.
\end{corollary}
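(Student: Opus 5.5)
We derive the statement from Corollary~\ref{cor:profile-computable-posets} and Mostowski's theorem that there is a computable partial order \((\mathbb N,\preceq_U)\) into which every countable partial order embeds. Apply Corollary~\ref{cor:profile-computable-posets} to \(\preceq_U\). This yields \(A\le_T\emptyset'\) and sets \(B_i=C_i\equiv_{\rm bfo}A\) with \(B_i\le_1 B_j\iff i\preceq_U j\). Now let \((P,\le_P)\) be any countable partial order and \(g:P\to\mathbb N\) an order-embedding into \(\preceq_U\). Then \(p\mapsto\deg_1(B_{g(p)})\) satisfies \(\deg_1(B_{g(p)})\le\deg_1(B_{g(p')})\iff g(p)\preceq_U g(p')\iff p\le_P p'\). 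Hence it is an order-embedding into the \(1\)-degrees inside the bounded finite-one degree of \(A\); injectivity follows from antisymmetry.

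The only remaining point, and the main obstacle, is nonrecursiveness of \(A\). We follow the argument in Corollary~\ref{cor:profile-combined}. First, \(\preceq_U\) has incomparable pairs (universality forces a two-element antichain to embed), so \(D=\{(i,j):i\npreceq_U j\}\) is nonempty. Fix one pair \((i,j)\in D\) and an index \(e_{\mathrm{id}}\) for the identity. To obtain infinitely many diagonalization steps, we use that there are infinitely many indices \(e\) of the identity function (padding). For each such \(e\), the requirement \(R_{e,i,j}\) cannot fall into Steps~1 or~2, so Step~3 colours a fresh block \(I_k\) with \(1\) and a later fresh block \(I_\ell\) with \(0\). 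Thus \(A\) has infinitely many \(1\)-blocks and infinitely many \(0\)-blocks, and is infinite and coinfinite.

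Then, exactly as in Corollary~\ref{cor:profile-combined}, each \(B_i\) is constant on each nonempty block \(K_{i,k}\) with the value of \(A\) on \(I_k\), since \(h_i[K_{i,k}]=I_k\). So every \(B_i\) is infinite and coinfinite. If \(A\) were recursive, all \(B_i\) would be recursive, infinite and coinfinite, hence pairwise \(1\)-equivalent. This contradicts \(B_i\not\le_1 B_j\) for the incomparable pair chosen above. Therefore \(A\) is nonrecursive.

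We expect the delicate part to be only this bookkeeping. One must check that the construction really considers requirements for infinitely many identity indices, which holds since every \(e\) and every \((i,j)\in D\) give a requirement. One must also check that Step~3 always assigns both a \(1\)-block and a \(0\)-block at fresh indices. Both are immediate from the construction in the proof of Theorem~\ref{thm:profile}.
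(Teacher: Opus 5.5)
Your proof is correct and follows the paper's argument: you apply Corollary~\ref{cor:profile-computable-posets} to Mostowski's universal computable order, and you obtain nonrecursiveness by forcing infinitely many Step~3 diagonalizations with identity indices, then using that any two infinite coinfinite recursive sets are $1$-equivalent. The differences are minor: the paper gets infinitely many diagonalization stages from infinitely many pairs in $D$ (via an infinite antichain in $\preceq_u$) with a single identity index, whereas you fix one pair and use infinitely many identity indices by padding (valid for the standard numbering fixed in the paper); you also derive the final contradiction from an incomparable pair rather than from a strict chain $B_p<_1 B_q$.
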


\begin{proof}

Let $\preceq_u$ be a computable universal partial order on $\mathbb N$, that is, a computable partial order into which every countable partial order embeds.
The existence of such an order is due to Mostowski \cite{Mostowski1938}; see also \cite[p.~11]{RSZ2026} and \cite[Exercise~V.2.8, p.~461]{OdifreddiCRT}.
Apply Corollary~\ref{cor:profile-computable-posets} to $\preceq_u$. Then there
exist a set $A\le_T\emptyset'$ and sets
\[
B_0,B_1,\dots
\]
such that
\[
B_i\equiv_{\rm bfo}A
\qquad\text{for every }i\in\mathbb N,
\]
and
\[
B_i\le_1 B_j
\qquad\text{if and only if}\qquad
i\preceq_u j.
\]
Every countable partial order thus embeds into
\[
\{\deg_1(B_i):i\in\mathbb N\},
\]
and hence into the partially ordered set of $1$-degrees contained in the
bounded finite-one degree of $A$.

It remains to show that $A$ may be taken nonrecursive. In the construction
from the proof of Theorem~\ref{thm:profile}, fix an index $e_{\mathrm{id}}$
for the identity function.
Since $\preceq_u$ is universal, it contains an
infinite antichain, so the set
\[
D=\{(i,j)\in\mathbb N^2 : i\npreceq_u j\}
\]
is infinite. For each $(i,j)\in D$, the requirement
\[
R_{e_{\mathrm{id}},i,j}
\]
reaches the diagonalization case, since the identity function is total and
injective on every finite set. Hence infinitely many stages assign one fresh
block the value $1$ and a later fresh block the value $0$. Therefore $A$ has
infinitely many $1$-blocks and infinitely many $0$-blocks, so $A$ is infinite
and coinfinite.

For each \(i\) and each \(k\), the block \(K_{i,k}\) is nonempty and
\[
h_i[K_{i,k}]=I_k.
\]
Since \(A\) is constant on \(I_k\), the set \(B_i=A\circ h_i\) is constant on
\(K_{i,k}\), with the same value that \(A\) takes on \(I_k\). As \(A\) has
infinitely many \(1\)-blocks and infinitely many \(0\)-blocks, it follows that
each \(B_i\) is infinite and coinfinite.

Since $\preceq_u$ is universal, it contains a
strict $2$-chain, so there exist $p,q\in\mathbb N$ such that
\[
p\preceq_u q
\qquad\text{and}\qquad
q\npreceq_u p.
\]
Hence
\[
B_p<_1 B_q.
\]
If $A$ were recursive, then every $B_i$ would be recursive. But any two
infinite coinfinite recursive sets are one-one equivalent. This contradicts
$B_p<_1 B_q$. Therefore $A$ is nonrecursive.

\end{proof}

\section{Concluding Remarks}

The results of this paper show that bounded finite-one degrees can support a
much richer internal order structure than previously known explicit
constructions suggested. In particular, our profile method yields a
nonrecursive set \(A\le_T\emptyset'\) whose bounded finite-one degree contains a
copy of the dense linear order \((\mathbb Q,\le)\), an infinite antichain of
\(1\)-degrees, and, more generally, an embedded copy of every countable partial
order. Thus a single bounded finite-one degree may simultaneously exhibit
dense, incomparable, and universal behaviour at the level of \(1\)-degrees.

This picture should be viewed against two complementary background results.
On the one hand, Richter, Stephan, and Zhang \cite{RSZ2026} constructed a
nonrecursive bounded finite-one degree whose internal \(1\)-degrees form exactly
a strictly ascending chain of order type \(\omega\), and asked whether there are
bounded finite-one degrees consisting exactly of a dense linearly ordered set of
\(1\)-degrees.
On the other hand, previous work on \(m\)-rigidity
\cite{Cintioli2026} shows that, for a measure-\(1\) and comeager class of sets,
bounded finite-one degrees already contain infinite antichains of
\(1\)-degrees and therefore cannot be linearly ordered. The present
construction complements that typical negative picture from the opposite
direction: it provides an explicit non-\(m\)-rigid witness, necessarily lying
in the null, meager complement of the typical rigid regime, whose bounded
finite-one degree is far from rigid and already contains a copy of
\((\mathbb Q,\le)\).

At the same time, our results do not settle the exact realization problem posed
by Richter, Stephan, and Zhang. The bounded finite-one degree constructed here
does not consist \emph{exactly} of a dense linear order of \(1\)-degrees, since
it also contains incomparable elements. What the profile method shows is that
the real difficulty of the problem lies not in producing density itself, but in
isolating a degree whose entire internal \(1\)-degree structure is precisely a
dense chain.
Thus, in the sense of embedded suborders, bounded finite-one degrees can already
realize every possible linear order.
What remains open, in the sense emphasized by Richter, Stephan, and Zhang in
their further remarks, is whether some bounded finite-one degree can have as its
entire internal \(1\)-degree structure a linear order other than the one-element
order or \(\omega\).
More broadly, the Profile Theorem isolates a flexible block-geometric
mechanism for controlling one-one comparison patterns inside a single bounded
finite-one degree.
The coexistence of constant and oscillating profiles, as well as the passage from
concrete examples to embeddings of arbitrary countable partial orders, shows that,
at the level of embedded suborders, the internal order structure of bounded
finite-one degrees is considerably richer than had previously been exhibited by
explicit constructions.

\section*{Acknowledgments}

This work is the result of an extended human--AI collaboration. 
Several structural ideas and technical arguments emerged from exploratory interaction with AI-based reasoning systems (Gemini Deep Think (Google DeepMind) and ChatGPT Pro (OpenAI)), which were used at different stages of the conceptual development and technical verification of this work. 
The author has fully reworked and verified all arguments and bears sole responsibility for their correctness.

\end{document}